\newcommand{\Mod}{\mathbf{mod}}
\def\Var{{\rm Var}\,}
     \def\E{{\rm E}\,}
     \def\rng{\mathrm{Rng} }
     \def\dist{ \overset{\mathrm{iid}}{\sim}}
     \def\Exp{\mathrm{Exp}}
\newtheorem{theorem}{Theorem}
   \newtheorem{proposition}[theorem]{Proposition}
       \newtheorem{lemma}[theorem]{Lemma}
    \numberwithin{equation}{section}
    \numberwithin{theorem}{section}
    \theoremstyle{definition}
    \newtheorem{remark}[theorem]{Remark}
   \newtheorem{example}[theorem]{Example}
\begin{document}

\title{The Exponential Distribution Analog of the Grubbs--Weaver Method}

 \author{Andrew V. Sills\footnote{Andrew Sills was partially supported by Grant H98230-14-1-0159 
 from the National Security Agency when some of the work for this project was completed. 
 Sills (\texttt{asills@georgiasouthern.edu}) is the corresponding author.} \ and Charles W. Champ\\ Georgia Southern University\\ \small{Department of Mathematical
 Sciences, Statesboro, Georgia}}
\date{\today}
\maketitle

\begin{abstract}
 Grubbs and Weaver (1947) suggest a minimum-variance unbiased estimator for the population standard deviation of a normal random variable, where a random sample is drawn
and a weighted sum of the ranges of subsamples is calculated.  The optimal choice involves using as many subsamples of size eight as possible.  They verified their results numerically for samples of size up to 100, and conjectured that their ``rule of eights" is valid for all sample sizes.  Here we examine the analogous problem where the underlying distribution is exponential and find that a ``rule of fours" yields optimality and prove the result rigorously. \\
\vskip 5mm
KEYWORDS: Grubbs--Weaver statistic, exponential distribution, integer partitions,
combinatorial optimization
\end{abstract}

\section{Introduction}

Suppose
\[  X_1, X_2, \dots, X_n \dist f(x) \]
is a random sample of size $n$ where each $X_i$ is a continuous random variable 
with density function $f(x)$, and (unknown) standard deviation $\sigma$. 
 Denote the order statistics as
\[ X_{1:n} \leq X_{2:n} \leq \cdots \leq X_{n:n}. \]
The range of the sample is
\[ R_n = X_{n:n} - X_{1:n}  \] and the standardized sample range is
\[ W_n = \frac{R_n}{\sigma}. \]

In the case where the sample is drawn from a normal population,
the minimum variance unbiased estimator of $\sigma$ is known to be the 
bias corrected sample
standard deviation $S/c$, with
\[ S = \sqrt{ \frac{\sum_{i=1}^n(X_i - \bar{X})^2}{n-1} }  \]
and
\[ c = \sqrt{\frac{2}{n-1}} \frac{\Gamma(\frac{n}{2})}{\Gamma(\frac{n-1}{2})}, \]
where \[ \Gamma(x)= \int_0^\infty t^{x-1} e^{-t}\ dt \] is Euler's gamma function. 
On the other hand, an easily calculuated unbiased estimator for $\sigma$ is
\[ \frac{R_n}{\E(W_n)} ,\] which of course has a comparatively large variance.

\citet{GW} study a compromise between $S/c$ and $R_n/E(W_n)$ in the case where
the random sample is drawn from a normal population with mean $\mu$ and 
(unknown) variance
$\sigma^2$.  They partition the sample of size $n$ into $m$
subsamples of sizes $n_1, n_2, \dots, n_m$ respectively, where each $n_i \geq 2$, and
$n_1 + n_2 + \dots + n_m = n$.  Then estimate $\sigma$ by $\hat{\sigma}$, where
\begin{equation} \label{sigmahatdef}
\hat{\sigma} = \sum_{i=1}^m a_i R_{n_i}
\end{equation}
with $R_{n_i}$ representing the range of the $i$th subsample, which is of size $n_i$
(see Remark~\ref{remark} below for clarification),
and $a_1, a_2, \dots, a_m$ are a set of weights chosen to guarantee that $\hat{\sigma}$
will be unbiased.  They use the term ``group range'' to mean the range of the 
subsample, and thus entitle their paper ``The best unbiased estimate of population 
standard deviation based on group ranges.''
  
Recall that a \emph{partition} $\lambda$ of a positive integer $n$ is a representation 
of $n$ as an unordered sum of positive integers. Each summand is called a \emph{part}
of the partition, and the number of parts in a given partition is called its \emph{length}.
Since the order of the parts is irrelevant, it is often convenient to write the
partition $\lambda$ of length $m$ as
$\lambda = (n_1, n_2, \dots, n_m)$ where 
$n_1 \geq n_2 \geq \dots \geq n_m$ and $n_1 + n_2 + \cdots + n_m = n$.  Thus, e.g., the five partitions
of $n=4$ are as follows:
\[ (4) \qquad (3,1) \qquad (2,2) \qquad (2,1,1) \qquad (1,1,1,1). \]

  Alternatively, we may define $f_i = f_i(\lambda)$ to denote the \emph{frequency}
(or \emph{multiplicity}) of part $i$
in the partition $\lambda$, i.e. the number of times the part $i$ appears, and employ the
``frequency superscript notation" $\langle 1^{f_1} 2^{f_2} 3^{f_3} \cdots \rangle$.
In this notation, the five partitions of $n=4$ are
\[ \langle 4 \rangle \qquad \langle 1\ 3\rangle \qquad \langle 2^2 \rangle \qquad
\langle 1^2\ 2\rangle \qquad \langle 1^4 \rangle, \]
 where we have followed the convention that $f_i = 1$ the superscript is omitted and
 $i^{f_i}$ is omitted if $f_i=0$.
 
  We are interested in partitions of $n$ where all
parts are at least $2$, as a subsample of size $1$ will, by definition, have a range of $0$.
Let us call such a partition \emph{admissible}. 
If $p(n)$ denotes the number of unrestricted partitions of $n$, it is well-known and easy
to prove that the number $P(n)$ of admissible partitions of $n$ is
equal to $p(n) - p(n-1)$.  The number of admissible partitions of $n$ increases
rather rapidly with $n$; e.g., $P(100) = 21{,}339{,}417$.  To get a
rough idea of the size of $P(n)$ for general $n$, we mention in passing that 
the asymptotic formula
\begin{equation} \label{Mein} P(n) \sim \frac{\pi}{12\cdot 2^{1/2} \ n^{3/2}} \ e^{\pi (2n/3)^{1/2}}
 \mbox{ as $n\to\infty$}
\end{equation} may be deduced from a theorem of~\citet{M}.
Equation~\eqref{Mein} is analogous to the famous asymptotic formula
of~\citet[p. 79, Eq. 1.41]{HR} for the unrestricted partition function
\[ p(n) \sim \frac{1}{4\cdot 3^{1/2} \ n} \ e^{\pi (2n/3)^{1/2}}  \mbox{ as $n\to\infty$}. \]
 
  Associated with each admissible 
partition $\lambda$ of $n$ is an estimator $\hat{\sigma} = \hat{\sigma}_\lambda$ 
defined in~\eqref{sigmahatdef};
for a given $n$, the admissible partition $\lambda$ that 
corresponds to the $\hat{\sigma}_\lambda$ of minimum 
variance will be called the \emph{optimal partition} of $n$.
  
~\citet{GW} performed extensive computations for $2\leq n \leq 100$, 
and showed that the optimal partition of $n$, when the underlying distribution is normal,
uses as many $8$'s as
possible (with occasional $7$'s or $9$'s to adjust for the fact that not every $n$ is a
multiple of $8$), except for sporadic exceptions that occur for small $n$.
~\citet{GW} did not supply  a rigorous proof that their assertions held for
$n>100$, perhaps 
 owing to the lack of closed form expressions for the expected value and variance of
 the range when the underlying distribution is normal.
 
   Here we investigate the analogous problem in the case where the underlying 
distribution is exponential.

\section{Precise Statement of the Problem}
Let \[ X_1, X_2, \dots, X_n \dist \Exp(\theta); \] i.e. each $X_i$ has pdf
\[ f(x; \theta) = \frac{1}{\theta} e^{-x/\theta}, \] for $x\geq 0$ and $0$ otherwise, 
where $\theta>0$.
Since $\Var(X_i) = \theta^2$, by estimating the population standard deviation $\sigma$, we
are equivalently estimating the parameter $\theta$.  

\begin{remark} \label{remark}
  For a given partition $\lambda=(n_1, n_2, \dots, n_m)$ of $n$ with length $m$,
  we understand that the $m$ subsamples 
  of the random variables $X_1, X_2, \dots, X_n$ are to be
  \begin{multline*} \{ X_1, X_2, \dots, X_{n_1} \}, \{ X_{n_1+1}, \dots, X_{n_1+n_2} \}, \\ \dots,
   \{ X_{n_1+n_2 + \cdots + n_{m-1}+1}, \dots, X_{n_1+n_2+ \cdots + n_m} \}  . \end{multline*}
\end{remark}

For each admissible partition of $n$, we have 
$f_1 = 0$ since each part must be at least $2$, and $f_{n-1} = 0$ because
a partition of $n$ with no $1$'s clearly cannot have $n-1$ as a part.

  Following the notation in~\cite{GW}, let
  \[ d_n = \frac{\E(R_n)}{\sigma} \]
  and
  \[ k_n^2 = \frac{\E(R_n - d_n \sigma)^2}{\sigma^2} = \frac{\Var(R_n)}{\sigma^2}. \]
  
  It is well-known (see, e.g., \citet[p. 52, Ex. 3.2.1]{DN}) that 
    \[ d_n = \theta^{-1} H_{n-1,1} \mbox{  and  } k_n^2 = \theta^{-2} H_{n-1,2} \]
 where \[ H_{n,j} = \sum_{i=1}^n \frac{1}{i^j} \] are generalized harmonic numbers.

   A linear combination of random variables which gives the 
 minimum variance unbiased estimate has
 coefficients which are inversely proportional to the variances of the variables.
 Consequently, let
 \begin{equation} \label{CoeffDef}
  a_i =  \left(  {\frac{d_{n_i}}{k^2_{n_i}}} \right)  
  \left( {\sum_{j=1}^m \frac{d_{n_j}^2}{k^2_{n_j}}} \right)^{-1}. \end{equation}
 
 \begin{proposition}
 The estimator $$\hat{\sigma} = \sum_{i=1}^m a_i R_{n_i},$$ 
 where $a_i$ is defined in~\eqref{CoeffDef}, is an unbiased estimator
 of $\sigma$.
  \end{proposition}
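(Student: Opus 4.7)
The plan is to verify unbiasedness by a direct computation using only linearity of expectation; no independence of the subsamples is needed at this stage (though Remark~\ref{remark} guarantees it and will be essential later when computing variances). The key observation is that the normalization factor appearing in the denominator of the weights~\eqref{CoeffDef} was chosen precisely so that the expectation of $\hat\sigma$ telescopes to $\sigma$.

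Concretely, I would first invoke the definition $d_{n_i} = \E(R_{n_i})/\sigma$ to rewrite $\E(R_{n_i}) = d_{n_i}\sigma$ for each $i$. Then by linearity,
\[
\E(\hat\sigma) \;=\; \sum_{i=1}^m a_i \,\E(R_{n_i}) \;=\; \sigma \sum_{i=1}^m a_i d_{n_i}.
\]
Substituting the explicit formula~\eqref{CoeffDef} for $a_i$, the factor $\bigl(\sum_{j=1}^m d_{n_j}^2/k_{n_j}^2\bigr)^{-1}$ pulls out of the $i$-sum, and the remaining sum $\sum_{i=1}^m d_{n_i}^2/k_{n_i}^2$ cancels it exactly, leaving $\E(\hat\sigma)=\sigma$.

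There is no genuine obstacle here: the construction of the weights as ``proportional to $d_{n_i}/k_{n_i}^2$, renormalized by the reciprocal of $\sum_j d_{n_j}^2/k_{n_j}^2$'' was engineered to produce this cancellation, so the verification amounts to unwinding definitions. The real content of the paper lies in the subsequent optimization problem — showing that among admissible partitions $\lambda$ of $n$, the variance of $\hat\sigma_\lambda$ is minimized by the ``rule of fours'' — and that is where the harmonic-number identities for $d_n$ and $k_n^2$ will need to be exploited in earnest.
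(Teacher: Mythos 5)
Your proposal is correct and follows essentially the same route as the paper: linearity of expectation, the substitution $\E(R_{n_i}) = d_{n_i}\sigma$, and the observation that $\sum_{i=1}^m a_i d_{n_i} = 1$ by construction of the weights in~\eqref{CoeffDef}. You merely spell out the final cancellation more explicitly than the paper's one-line chain of equalities does.
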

  \begin{proof}
  \[ \E(\hat{\sigma}) = \E\left( \sum_{i=1}^m a_i R_{n_i} \right) = 
  \sum_{i=1}^m a_i \E(R_{n_i}) =  \sum_{i=1}^m a_i d_{n_i} \sigma = \sigma. \]
  \end{proof}

Now we state the main theorem:

\begin{theorem}[The Exponential ``Rule of Fours''] \label{main}
   Fix an integer $n>1$.  Write $n = 4q + r$, where
   $r= r(n) =\Mod(n,4)$, the least nonnegative residue of $n$ modulo $4$. 
      The optimal partition
  of $n$
  is as follows:
  \begin{itemize}
      \item$ \langle 4^q \rangle$, if $r=0$;   
      \item $\langle 4^{q-r} 5^r \rangle$,    if $r=1$ or $2$ and $q\geq r$;
      \item $\langle 3^1 4^{q} \rangle $,     if $r=3$ and $q\geq 1$;                                                                                    
       \item $\langle n \rangle = (n) $, if $2\leq n \leq 5$; and
       \item $\langle 3^2 \rangle = (3,3)$, if $n=6$.
   \end{itemize}
\end{theorem}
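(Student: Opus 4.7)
Since the subsamples use disjoint $X_i$'s, the ranges $R_{n_1},\dots,R_{n_m}$ are independent. A routine calculation from~\eqref{CoeffDef} then yields
\[
\Var(\hat\sigma) \;=\; \frac{\sigma^2}{S(\lambda)}, \qquad S(\lambda) := \sum_{i=1}^m g(n_i), \qquad g(m) := \frac{d_m^2}{k_m^2} = \frac{H_{m-1,1}^2}{H_{m-1,2}}
\]
(the $\theta$-factors cancel in the ratio $d_m^2/k_m^2$). So Theorem~\ref{main} becomes the purely combinatorial claim that among admissible partitions $\lambda$ of $n$, the one described there uniquely maximizes $S(\lambda)$.

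The linchpin is the pointwise inequality
\begin{equation*}
g(m) \;\le\; \frac{m}{4}\,g(4) \qquad \text{for every integer } m \ge 2,\ \text{with equality iff } m = 4. \tag{$\star$}
\end{equation*}
Summing $(\star)$ over the parts of any admissible $\lambda$ gives $S(\lambda) \le \tfrac{n}{4}g(4)$, with equality iff $\lambda = \langle 4^q\rangle$, which settles $r=0$. For $r\ne 0$, I would rewrite $S(\lambda) = \tfrac{n}{4}g(4) - \Psi(\lambda)$, where $\Psi(\lambda) := \sum_i \psi(n_i)$ and $\psi(m) := \tfrac{m}{4}g(4) - g(m)\ge 0$, so the task reduces to minimizing the nonnegative cost $\Psi$ within each residue class. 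Rational computation gives $\psi(2)=\tfrac{23}{98}$, $\psi(3)=\tfrac{51}{980}$, $\psi(4)=0$, $\psi(5)=\tfrac{305}{8036}$, and one verifies the orderings $\psi(5)<\psi(3)<3\psi(5)$, $2\psi(5)<\psi(6)$, and $\psi(m)>\psi(3)$ for all $m\ge 6$.

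For each $r\in\{1,2,3\}$ I would enumerate the candidate ``non-$4$ sub-partitions''--multisets of parts $\ge 2$ and $\ne 4$ whose total $T$ satisfies $T\equiv r\pmod 4$, the rest of $\lambda$ being $4$'s (contributing nothing to $\Psi$). Because $\psi(m)/m$ is minimized over $m\ne 4$ at $m=5$, the uniform bound $\sum_i \psi(n_i) \ge (\psi(5)/5)\,T$ truncates the otherwise infinite list of candidates once $T$ exceeds a small threshold. The handful of remaining sub-partitions are compared directly using the orderings above, and the winners emerge as $\{5\}$ for $r=1$ (cost $\psi(5)$), $\{5,5\}$ for $r=2$ (cost $2\psi(5)$), and $\{3\}$ for $r=3$ (cost $\psi(3)$), matching Theorem~\ref{main}. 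The small cases $n\le 6$, where the general recipe would demand $q<r$, are disposed of by enumerating the at most $P(6)=4$ admissible partitions directly.

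\textbf{Main obstacle.} The technical heart is the pointwise inequality $(\star)$. For $m$ in a bounded window (say $m\le 20$) it is an exact rational check; for large $m$ one uses the monotone bounds $H_{m-1,1}\le 1+\ln(m-1)$ and $H_{m-1,2}\ge \pi^2/6 - 1/(m-1)$ to show that $g(m)/m = O((\ln m)^2/m)$, which drops below $g(4)/4$ past a small cutoff. A cleaner presentation likely phrases $(\star)$ as the strict unimodality of $m\mapsto g(m)/m$ with peak at $m=4$, provable by comparing consecutive ratios $g(m+1)/g(m)$ to $(m+1)/m$. Several of the key margins in the $\psi$-ordering (e.g.\ $\psi(3)-\psi(5)\approx 0.014$) are narrow, so all verifications must be carried out in exact rational arithmetic rather than by decimal approximation.
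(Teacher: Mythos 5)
Your proposal is correct and follows essentially the same route as the paper: the same reduction to maximizing $\sum_i C_{n_i}$, the same key inequality $C_m/m \le C_4/4$ with equality only at $m=4$ (the paper's Lemma proved with the same harmonic-number bounds plus a finite rational check), and the same deficiency function $\psi(j) = jC_4/4 - C_j$, which appears in the paper as the edge weights of a four-vertex digraph whose minimum-weight paths (found by Dijkstra's algorithm, in the guise of a group relaxation of the equality knapsack program) carry out exactly the residue-class minimization you perform by direct enumeration. The only point to tighten is that your truncation bound quietly relies on the stronger fact that $C_m/m$ is second-largest at $m=5$ (equivalently $\psi(m)/m \ge \psi(5)/5$ for $m\ne 4$), which requires the same style of verification as $(\star)$ itself but is not stated in the paper's lemma.
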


\begin{example}
Suppose we have the sample size $n=22$ which implies $q=5$ and $r=2$.  Thus,
according to Theorem~\ref{main}, the optimal partition of $22$ is $(5,5,4,4,4)$
and this in turn gives
\begin{multline*}
\hat{\sigma} =  \frac{5880}{27133} \Big[  \rng(X_1, \dots, X_5)  + \rng(X_6, \dots, X_{10}) +
\Big]  \\ + \frac{8118}{27133} \Big[ \rng(X_{11}, ..., X_{14}) +
   \rng(X_{15} , \dots, X_{18} ) + \rng(X_{19},\dots, X_{22}) \Big],
\end{multline*}
where for $i<j$, $\rng(X_i, \dots, X_j)$ denotes the range of the subsample 
$X_i, X_{i+1}, \dots, X_j$ of the sample $X_1, X_2, \dots, X_n$.  
\end{example}

\section{Proof of Theorem~\ref{main}}
Next, Theorem~\ref{main} will be reformulated into an integer linear program.
Upon solving the equivalent integer linear program, we will have proved Theorem~\ref{main}.

\subsection{Reformulation as an integer linear program}

For any given positive integer $j$, let us define
\[ C_j= \frac{d_j^2}{k_j^2} = \frac{\big( E(R_j) \big)^2}{\Var(R_j)} = 
\frac{H_{j-1,1}^2}{H_{j-1,2}} ,\]
observing that $C_j$ is independent of $\sigma$. 
Thus, the $C_j$ are
known absolute constants.
For a given random sample of size $n$, we seek a partition $\lambda$ of $n$
such that
$\Var \hat{\sigma}$ is minimized when the sample is partitioned according to
$\lambda$.

Notice that
\begin{equation}\label{VarSigmaHat} \Var(\hat{\sigma}) =  \Var \left( \sum_{i=1}^m a_i R_{n_i} \right) 
= \sum_{i=1}^m a_i^2 \Var(R_{n_i})  
= \frac{\sigma^2}{\sum_{i=1}^m \left( \frac{d_{n_i}}{k_{n_i}} \right)^2 } = 
\frac{\sigma^2}{\sum_{i=1}^m C_{n_i}}
.\end{equation}
Thus we seek the partition of $n$ which causes the denominator in the
rightmost expression of~\eqref{VarSigmaHat} to be as large as possible.

 For any given $n$, an optimal partition is given by a solution of the following integer
 linear program:

{\begin{equation*}  
 \mbox{maximize} \left(  C_2 f_2 + C_3 f_3 + \cdots + C_n f_n  \right)  \end{equation*}
 \centerline{subject to}
 \begin{equation} 2f_2 + 3f_3 + 4f_4 + \cdots  + nf_n = n 
 \tag{EKP}\end{equation}
 \[ f_2, f_3, f_4, \dots, f_n \geq 0 \]
  \[ f_2, f_3, f_4, \dots, f_n \in \mathbb Z. \]}
  
  The objective function is the denominator in the far right member of~\eqref{VarSigmaHat}.
  The constraints guarantee that we search only over admissible partitions
of $n$. 
  
  The program (EKP) is an instance of the equality-constrained ``knapsack problem."  
While integer linear programming problems are in general NP-complete, we will be able
to exploit the special structure to provide an optimal solution to (EKP).

\subsection{Statement and proof of the key lemma}
But first, our solution is contingent upon the following lemma. 
\begin{lemma} \label{lem}
  The function $C_n / n$ attains its maximum value at $n=4$.
\end{lemma}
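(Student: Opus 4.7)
The plan is to combine explicit calculation at a handful of small values with an algebraic monotonicity argument. Write $g(n) := C_n/n = H_{n-1,1}^2/(n H_{n-1,2})$ and compute
\[ g(2) = \tfrac{1}{2}, \quad g(3) = \tfrac{3}{5}, \quad g(4) = \tfrac{121}{196}, \quad g(5) = \tfrac{25}{41}. \]
Cross-multiplication confirms that $g(4)$ strictly exceeds each of the other three; for instance $g(5) < g(4)$ reduces to $25 \cdot 196 = 4900 < 4961 = 121 \cdot 41$. It then suffices to show that $g$ is strictly decreasing on $\{5,6,7,\dots\}$, for then $g(n) \leq g(5) < g(4)$ for every $n \geq 5$.

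To prove this monotonicity, set $a := H_{n-1,1}$ and $b := H_{n-1,2}$, so that $H_{n,1} = a + 1/n$ and $H_{n,2} = b + 1/n^2$. The inequality $g(n+1) < g(n)$ is equivalent to $nb(a + 1/n)^2 < (n+1)a^2(b + 1/n^2)$; expanding, cancelling the common term $na^2 b$, and rearranging yields
\[ ab(a-2) + \frac{(n+1)a^2}{n^2} > \frac{b}{n}. \]
For $n \geq 5$ we have $a \geq H_{4,1} = 25/12 > 2$, so $ab(a-2) \geq 0$. The remaining piece $(n+1)a^2/n^2 > b/n$ follows from $a^2 \geq (25/12)^2 > 4$ and $b < \pi^2/6 < 2$: indeed $(n+1)a^2/n^2 > a^2/n \geq 4/n > 2/n > b/n$.

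The main obstacle is the threshold at $n = 4$, where $a = 11/6 < 2$ makes the term $ab(a-2)$ negative, so the clean algebraic bound above cannot be applied to derive $g(5) < g(4)$. This is precisely why the case $n = 4$ must be dispatched as part of the small-case verification rather than being absorbed into the monotonicity step. Everything else is routine manipulation of harmonic sums.
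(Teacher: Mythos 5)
Your proof is correct, and it takes a genuinely different route from the paper's. The paper bounds $C_n/n$ from above by the smooth function $h(n) = (1+\log(n-1))^2/(n-1)$ using the crude estimates $H_{n,1} < 1+\log n$ and $H_{n,2} > 1 - \tfrac{1}{n+1}$, shows by differentiation that $h$ decreases for $n > e+1$, observes $h(34) < 121/196$, and then disposes of the roughly thirty remaining values $2 \leq n \leq 33$, $n \neq 4$, by direct computation. You instead prove the exact statement that $g(n) = C_n/n$ is strictly decreasing for $n \geq 5$, via the rearranged inequality $ab(a-2) + (n+1)a^2/n^2 > b/n$ with $a = H_{n-1,1} \geq 25/12 > 2$ and $b = H_{n-1,2} < \pi^2/6 < 2$; I checked the expansion and the bounds, and they hold (in fact the single estimate $(n+1)a^2/n^2 > a^2/n > 4/n > b/n$ already carries the day once $ab(a-2) \geq 0$ is noted). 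Your approach buys quite a lot: it is entirely elementary (no calculus, no logarithmic bounds), it reduces the explicit verification from about thirty cases to the four values $g(2), g(3), g(4), g(5)$, and it establishes the stronger structural fact of monotonicity from $n=5$ onward. The paper's approach requires less algebraic ingenuity in exchange for a heavier computational burden. Your closing remark correctly identifies why the threshold sits at $n=5$ rather than $n=4$: the factor $a - 2 = H_{3,1} - 2 = -1/6$ is negative at $n=4$, which is consistent with $g$ actually increasing from $n=3$ to $n=4$.
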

\begin{remark}  The significance of maximizing function $C_n/n$ is as follows: given a
sample of size $N$ that is partitioned into $N/n$ samples of size $n$ each, the denominator in the rightmost member of~\eqref{VarSigmaHat} is $N C_n/ n$.  The sample
size $N$ is fixed, but we wish to find the part size $n$ that maximizes $C_n/n$ in
order to maximize the denominator of the rightmost member of~\eqref{VarSigmaHat}.
Of course, this can only be done exactly in the case where the sample size $N$ is a 
multiple of the part size $n$, accounting for the fact that while $n=4$ gives the
optimal part size, for sample sizes that are not multiples of $4$, the optimal
partition includes some parts of size $3$ or $5$.
\end{remark}
\begin{proof}[Proof of Lemma~\ref{lem}]
  We need to show that for $n\neq 4$, 
     \[ \frac{C_n}{n} =
     \frac{H_{n-1,1}^2}{n H_{n-1,2}} < \frac{H_{3,1}^2}{4 H_{3,2}} = \frac{121}{196}. \]
  We will utilize the elementary inequalities 
  \begin{equation} \label{ineq1}
     H_{n,1} < 1 + \log n
  \end{equation}   and
  \begin{equation} \label{ineq2}
    H_{n,2} > 1 - \frac{1}{n+1} .
  \end{equation}
 
Inequality~\eqref{ineq1} may be deduced as follows: 
note that $\int_x^{x+1} 1/t \ dt = \log(x+1)-\log x$ for $x>0$.  Then observe that 
$1/x > 1/t > 1/(x+1)$ in the integrand, thus $1/x > \log(x+1) - \log(x) > 1/(x+1)$ and thus
$\log(n+1) < H_{n,1} <1 + \log n$ follows by summation.  Inequality~\eqref{ineq2}
follows from the fact that 
\[ H_{n,2} = \sum_{j=1}^n \frac{1}{j^2} > \int_1^{n+1} \frac{1}{x^2} \ dx = 1-\frac{1}{n+1}. \]
  
Applying~\eqref{ineq1} and~\eqref{ineq2}, we obtain   
\begin{align}
  \frac{C_n}{n} =
   \frac{H_{n-1,1}^2}{n H_{n-1,2}} < \frac{ (1+\log(n-1))^2 }{n\left(1 - \frac{1}{n} \right) }
   = \frac{ (1+\log(n-1))^2 }{n-1}.
\end{align}
Let $h(n) = { (1+\log(n-1))^2 }/(n-1)$.
Thus, $C_n/n$ is bounded above by $h(n)$.
Let us now temporarily consider $n$ to be a real variable with domain $n>1$.  
Notice that
\begin{equation} \label{dhdn}
\frac{dh}{dn} = \frac{d}{dn} \left\{  \frac{ (1+\log(n-1))^2 }{n-1}  \right\} =
 \frac{ [1+\log(n-1)] [1-\log(n-1) ]  }{(n-1)^2} .
\end{equation}
Since $1+\log(n-1)>0$ and $(n-1)^2>0$ for $n>1$, 
$dh/dn$ is negative whenever $\log(n-1)>1$, i.e. when $n > e+1$.  Thus $h(n)$ 
is decreasing for all integers $n\geq 4$.   Now $h(34) < 121/196$.   Thus 
$C_n/n < h(n) < 121/196$ for all $n\geq 34$.  That $C_n/n < 121/196$ for $n=2,3$ and
$5\leq n \leq 33$ can be verified by direct computation.
\end{proof}

\subsection{Solving (EKP) by the method of group relaxation}
With Lemma~\ref{lem} in hand, we now proceed to solve (EKP).
  Fix $n = 4q + r$, with $r=\Mod(n,4)$.
We now closely follow the treatment of~\citet[\S7.2, p. 184 ff]{JL} to solve (EKP).

In general, the first step is to seek an upper bound for 
\[  \max \sum_{j=2}^n C_j f_j, \] and then, if necessary, initiate a branch-and-bound procedure.
In the case of interest, this initial step will be sufficient to find an optimal
solution to (EKP).
By Lemma~\ref{lem} we have that
\[ \frac{C_4}{4} = \max \left\{ \frac{C_j}{j} : 2 \leq j \leq n \right\} . \]
Relax the nonnegativity restriction on $f_4$, and solve for $f_4$ in terms of the other 
$f_j$'s, namely 
\begin{equation} \label{f4}
f_4 = \frac n4 - \frac 14\underset{j\neq 4}{\sum_{2\leq j\leq n}} j f_j,
\end{equation} to obtain the ``group relaxation":

\begin{equation*} 
 \frac{n C_4}{4} + \max \underset{j\neq 4} { \sum_{2\leq j \leq n}}
 \left( C_j - \frac{j C_8}{8} \right) f_j \end{equation*}
 \centerline{subject to}
 \begin{equation}   \underset{j\neq 4} { \sum_{2\leq j \leq n}} j f_j = n - 4f_4; 
 \tag{GR}\end{equation}
 \[ f_2, f_3, f_5, f_6, f_7, \dots, f_n \geq 0 ;\]
  \[ f_2, f_3, f_4, \dots, f_n \in \mathbb Z. \]
The group relaxation (GR) is in turn equivalent to:
\begin{equation*}   
 \frac{n C_4}{4} - \min \underset{j\neq 4} { \sum_{2\leq j \leq n}}
 \left( -C_j + \frac{j C_4}{4} \right) f_j \end{equation*}
 \centerline{subject to}
 \begin{equation}   \underset{j\neq 4} { \sum_{2\leq j \leq n}} j f_j \equiv r \pmod{4};
 \tag{GR$'$}\end{equation}
 \[ f_2, f_3, f_5, f_6,  f_7, f_8, \dots, f_n \geq 0; \]
  \[ f_2, f_3, f_4, \dots, f_n \in \mathbb Z. \]

We note that every feasible solution in (GR$'$) corresponds to a value of $f_4$ in (GR),
via Equation~\eqref{f4}.  
If we are lucky, the optimal solution we find to (GR$'$) yields an 
$f_4\geq 0$, and then we will have also found an optimal solution to (EKP).
(If we are not lucky and the corresponding $f_4$  in (GR) is negative, then we
must embark on a branch-and-bound procedure with potentially many iterations.)
Here, however, we will show that for all $n > 6$, we are indeed lucky, and the
optimal solution to (EKP) will indeed be found directly with no need to initiate 
branch-and-bound.
 
   In order to solve (GR$'$), we form a weighted directed 
 multigraph $G$ as follows.  Let the vertex set 
be given by
$V(G)= \{ 0, 1, 2, 3 \}$.    For each $v\in V(G)$ and $j\in \{ 2,3, 5, 6, 7, 8,\dots, n \}$, 
there is an edge $e(v,j)$ of weight $-C_j + j C_4/4$ 
from $v$ to $\Mod( v+j, 4 )$.   Note well that in this notation,
 the ``ending vertex" of edge $e(v,j)$ is
 \emph{not} $j$
but rather $\Mod(v+j,4)$. We seek a minimum weight directed walk from $0$ to $r$.
Each time we include the edge $e(v,j)$ in the diwalk, we increment $f_j$ by $1$.
Since for each $j\neq 4$, the edge weight $-C_j + j C_4/4 \geq 0$, we may use the
algorithm of~\citet{EWD} to find a minimum weight dipath from $0$ to $r$.

We first dispense with the trivial case $r=0$. 
Here we immediately have the optimal solution (via the empty path from $0$ to
$0$)
\[  f_j =  \left\{ \begin{array}{ll} q &\mbox{if $j=4$}\\
                                               0  & \mbox{otherwise} 
                                         \end{array} \right. . \]

We now move on to the remaining cases $1\leq r \leq 3$.
Notice that connecting any two vertices in $G$ there are multiple directed edges.  
For each $j \in \{ 2, 3, 4, \dots, n\} \setminus \{ 4 \}$, 
connecting  vertex $v$ to vertex $\Mod(v+j,4)$, we have the following edges:
$e(v,j), e(v,2j), e(v,3j), \dots, e(v,mj) $, where $m$ is the largest integer such that
$v+mj\leq n$.  All of these edges have different weights, and since we seek a 
minimum weight diwalk, for each $j$, we may safely remove all but the one of lowest weight,
resulting in a much less ``cluttered" digraph $G'$.
 Thus $G'$ is a digraph with four vertices and $12$ directed edges
(as each of the $4$ vertices now has exactly one directed edge to each of the $3$ other vertices).

\begin{tabular}{| c| c | l |}
\hline
edge & connecting vertices  & weight \\
\hline
$e(0,1)$ & $0\rightarrow 1$ & $-C_5 + 5 C_4/4 = 305/8036 \approx 0.038$\\
$e(1,1)$ & $1\rightarrow 2$ & $-C_5 + 5 C_4/4 = 305/8036 \approx 0.038$\\ 
$e(2,1)$ & $2\rightarrow 3$ & $-C_5 + 5 C_4/4 = 305/8036 \approx 0.038$\\
$e(0,2)$ & $0\rightarrow 2$ & $-C_2 + 2 C_4/4 = 23/98 \approx 0.235$\\
$e(1,2)$ & $1\rightarrow 3$ & $-C_2 + 2 C_4/4 = 23/98 \approx 0.235$\\
$e(2,2)$ & $2\rightarrow 0$ & $-C_2 + 2 C_4/4 = 23/98 \approx 0.235$\\
$e(0,3)$ & $0\rightarrow 3$ & $-C_3 + 3 C_4/4 = 51/980 \approx 0.052$\\
$e(1,3)$ & $1\rightarrow 0$ & $-C_3 + 3 C_4/4 = 51/980 \approx 0.052$\\
$e(2,3)$ & $2\rightarrow 1$ & $-C_3 + 3 C_4/4 = 51/980 \approx 0.052$\\
\hline
\end{tabular}

\vskip 3mm
Perform the algorithm of~\citet{EWD} on the weighted directed graph $G'$ to find that
the minimum weight directed paths are as follows:
\vskip 3mm
\begin{tabular}{|c|c|c|c|}
\hline
from $\rightarrow$ to & using edges  & total weight & resulting nonzero values  \\
\hline
$0\to 1$ & $e(0,1)$  & $305/8036$ & $f_5 = 1, f_4 = q-1$ \\
$0\to 2$ & $e(0,1)$ and $e(1,1)$ & $610/8036$ & $f_5 =2, f_4 = q-2$ \\
$0\to 3$ & $e(0,3)$     &$51/980$ & $f_3 = 1, f_4 = q$
\\
\hline
\end{tabular}
\vskip 3mm
Thus we arrive at precisely the desired result.

Since the value of $f_4$ obtained from the group relaxation (GR) (see
Eq.~\eqref{f4}) is always nonnegative,
then group relaxation solves the original equality knapsack problem (EKP), and thus
no branch-and-bound procedure need be undertaken.  

With the solution of (EKP), we
have proved Theorem~\ref{main} for $n>6$.  For $2\leq n \leq 6$, the theorem can
be established by direct calculation. \qed

\section{Conclusion}
We have proved that an unbiased estimator from an exponential population formed by taking
an appropriately weighted sum of the ranges of subsamples obtained by partitioning the
original sample of size $n$ has minimal variance when the subsamples are each of size $4$
(or as close as possible when $n$ is not a multiple of $4$).  This contrasts with the
work of~\citet{GW}, where the optimal subsample sizes are $8$, when the population is
normal.  A similar analysis could be applied to estimate the standard deviation from 
populations with other distributions.  For example, based on some
preliminary calculations, in the case of the Rayleigh distribution,
the optimal subsample sizes appear to be $8$, and in the case of the $\chi^2$ distribution,
the optimal subsample sizes appear to be $3$.  Another variant perhaps worth investigating
is seeking the optimal partition of the sample where instead of the range of subsamples,
we consider various quasi-ranges.

\citet[pp. 9--11]{HB} compare the efficiency of estimators of $\sigma$ based on quasi-ranges
with the Grubbs--Weaver estimator for $\sigma$ in the case of a normal parent population and
remark [p. 11] that the asymptotic efficiency of the Grubbs--Weaver estimator is
75.38 percent.  In future work we would like to obtain analogous results for various
non-normal parent distributions, including the exponential distribution that was 
studied in this present paper.

\section*{Acknowledgments}
The authors thank the Editor and the anonymous referee for helpful suggestions that
improved the paper.

\bibliographystyle{natbib-harv}

\end{document}